\theoremstyle{plain}
\newtheorem{theorem}{Theorem}
\numberwithin{equation}{section}
\newcommand{\ii}{{\rm i}}
\begin{document}

\title {Elliptic functions from $F(\frac{1}{3}, \frac{2}{3} ; \frac{1}{2} ; \bullet)$ }

\date{}

\author[P.L. Robinson]{P.L. Robinson}

\address{Department of Mathematics \\ University of Florida \\ Gainesville FL 32611  USA }

\email[]{paulr@ufl.edu}

\subjclass{} \keywords{}

\begin{abstract}
Li-Chien Shen developed a family of elliptic functions from the hypergeometric function $_2F_1(\frac{1}{3}, \frac{2}{3} ; \frac{1}{2} ; \bullet)$. We comment on this development, offering some new proofs. 
\end{abstract}

\maketitle

\medbreak

\medbreak 

Shen [1] has presented an interesting construction of elliptic functions based on the hypergeometric function $F(\frac{1}{3}, \frac{2}{3} ; \frac{1}{2} ; \bullet)$. We open our commentary with a brief review of his construction, making some minor notational changes (most of which amount to the dropping of suffixes). 

\medbreak 

Fix $0 < k < 1$ and write 
$$u = \int_0^{\sin \phi} F(\frac{1}{3}, \frac{2}{3} ; \frac{1}{2} ; k^2 t^2) \, \frac{{\rm d} t}{\sqrt{1 - t^2}}$$
so that 
$$\frac{{\rm d} u}{{\rm d} \phi} = F(\frac{1}{3}, \frac{2}{3} ; \frac{1}{2} ; k^2 \sin^2 \phi).$$ 
\medbreak 
\noindent
In a (connected) neighbourhood of the origin, the relation $\phi \mapsto u$ inverts to $u \mapsto \phi$ fixing $0$. Define functions $s, c, d$ by 
$$s(u) = \sin \phi(u)$$
$$c(u) = \cos \phi(u)$$
and
$$d(u) = \phi\,'(u) = 1/F(\frac{1}{3}, \frac{2}{3} ; \frac{1}{2} ; k^2 s^2 (u)).$$ 
Plainly, $s$ and $c$ satisfy the Pythagorean relation  
$$s^2 + c^2 = 1.$$
Shen uses the hypergeometric identity  
$$F(\frac{1}{3}, \frac{2}{3} ; \frac{1}{2} ; \sin^2 z) = \frac{\cos \tfrac{1}{3} z}{\cos z}$$
and the trigonometric triplication formula 
$$4 \cos^3 \tfrac{1}{3} z - 3 \cos \tfrac{1}{3} z = \cos z$$ 
to show that $s$ and $d$ satisfy the relation  
$$d^3 + 3 d^2 = 4(1 - k^2 s^2)$$
or equivalently 
$$4 k^2 s^2 = (1 - d) (2 + d)^2.$$ 
By differentiation, 
$$s\,' = c \, d$$
$$c\,' = - s \,d$$
and 
$$d\,' = - \frac{8}{3} k^2 \frac{s \, c}{2 + d}.$$ 
By means of the $(s, d)$ and $(s, c)$ relations, it follows that $d$ satisfies the differential equation 
$$(d\,')^2 = \frac{4}{9} (1 - d) (d^3 + 3 d^2 + 4 k^2 - 4).$$

\medbreak 

\begin{theorem} \label{d}
The function $d$ is $1 - \frac{4}{9} k^2 (\wp + \tfrac{1}{3})^{-1}$ where $\wp$ is the Weierstrass function with invariants 
$$g_2 = \frac{4}{27} (9 - 8 k^2)$$
and 
$$g_3 = \frac{8}{27^2} (8 k^4 - 36 k^2 + 27).$$ 
\end{theorem}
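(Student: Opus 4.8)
The plan is to transform the first-order differential equation $(d')^2 = \tfrac49(1-d)(d^3+3d^2+4k^2-4)$ into Weierstrass normal form by an explicit substitution, read off the invariants from the coefficients, and then identify the resulting solution with the genuine $\wp$-function.

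Since $d = 1$ is a root of the quartic on the right, I would substitute $v = (1-d)^{-1}$, i.e.\ $d = 1 - v^{-1}$, which lowers the degree. Writing $x = v^{-1}$ one computes $d^3 + 3d^2 = 4 - 9x + 6x^2 - x^3$, so $(1-d)(d^3+3d^2+4k^2-4) = x(4k^2 - 9x + 6x^2 - x^3)$; substituting $d' = v' v^{-2}$ and clearing $v^{-4}$ turns the equation into the cubic
$$(v')^2 = \tfrac49(4k^2 v^3 - 9v^2 + 6v - 1).$$
To bring this to normal form, rescale $v = \tfrac{9}{4k^2} W$ to make the leading coefficient $4$, then shift $W = \wp + \tfrac13$ to annihilate the quadratic term; expanding, the cubic $4W^3 - 4W^2 + \tfrac{32}{27}k^2 W - \tfrac{64}{729}k^4$ becomes $4\wp^3 - g_2\wp - g_3$ with precisely $g_2 = \tfrac{4}{27}(9-8k^2)$ and $g_3 = \tfrac{8}{27^2}(8k^4 - 36k^2 + 27)$. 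This polynomial bookkeeping is the only computational step, and it is routine. Undoing the substitutions gives $d = 1 - v^{-1} = 1 - \tfrac49 k^2 (\wp + \tfrac13)^{-1}$, which is the asserted formula once we know the $\wp$ manufactured here is the Weierstrass function with these invariants.

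It remains to justify that last identification, which I expect to be the main obstacle. At $u = 0$ one has $\phi(0) = 0$, hence $s(0) = 0$ and $d(0) = 1/F(\tfrac13,\tfrac23;\tfrac12;0) = 1$; also $d'(0) = -\tfrac83 k^2 s(0)c(0)/(2 + d(0)) = 0$, and inserting $1 - d = a u^2 + O(u^4)$ into the differential equation forces $a = \tfrac49 k^2 \neq 0$. Thus $1 - d$ has a genuine double zero at the origin, so $v = (1-d)^{-1}$, and with it $\wp = \tfrac49 k^2(1-d)^{-1} - \tfrac13$, is well defined on a punctured neighbourhood of $0$ and has there a double pole with leading term $u^{-2}$. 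A meromorphic function with a double pole at $0$ satisfying $(\wp')^2 = 4\wp^3 - g_2\wp - g_3$ has its entire Laurent expansion at $0$ determined recursively by the equation, and that expansion coincides with the one of the Weierstrass function with invariants $g_2, g_3$; hence the two agree near $0$, and therefore wherever both are defined. An alternative, equivalent route is to substitute the claimed expression for $d$ directly into the differential equation, verify it using $(\wp')^2 = 4\wp^3 - g_2\wp - g_3$, and then invoke $d(0) = 1$ together with the pole of $\wp$ at $0$ to exclude a translated copy of $\wp$.
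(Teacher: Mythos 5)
Your proposal is correct and follows essentially the same route as the paper: the substitution $(1-d)^{-1}$ to kill the linear factor, a rescaling and a shift to reach $(\wp')^2 = 4\wp^3 - g_2\wp - g_3$, and then identification via the pole at the origin forced by $d(0)=1$. The only difference is that you spell out the final identification (double zero of $1-d$ at $0$, hence a double pole with leading term $u^{-2}$, and uniqueness of the Laurent solution of the Weierstrass equation) in more detail than the paper, which simply asserts that a solution with a pole at $0$ is $\wp$.
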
 

\begin{proof} 
Of course, we mean that $d = \phi'$ extends to the stated rational function of $\wp$. In [1] this is proved by appealing to a standard formula for the integral of $f^{-1/2}$ when $f$ is a quartic. Instead, we may work with the differential equation itself, as follows. First, the form of the differential equation 
$$(d\,')^2 = \frac{4}{9} (1 - d) (d^3 + 3 d^2 + 4 k^2 - 4)$$
suggests the substitution $r = (1 - d)^{-1}$: this has the effect of removing the explicit linear factor, thus 
$$(r\,')^2 = \frac{4 k^2}{9} \Big(4 r^3 - \frac{9}{k^2} r^2 + \frac{6}{k^2} r - \frac{1}{k^2}\Big).$$ 
Next, the rescaling $q = \frac{4 k^2}{9} r$ leads to 
$$(q\,')^2 = 4 q^3 - 4 q^2 + \frac{2^5}{3^3} k^2 q - \frac{2^6}{3^6} k^4$$
and the shift $p = q - \frac{1}{3}$ removes the quadratic term on the right side, yielding 
$$(p\,')^2 = 4 p^3 - g_2 p - g_3$$
with $g_2$ and $g_3$ as stated in the theorem. Finally, the initial condition $d(0) = 1$ gives $p$ a pole at $0$; thus $p$ is the Weierstrass function $\wp$ and so $d$ is as claimed. 
\end{proof} 

\medbreak 

 We remark that $\wp$ has discriminant 
$$g_2^3 - 27 g_3^2 = \frac{16^3}{27^3} k^6 (1 - k^2).$$

\bigbreak 

Now, recall that $0 < k < 1$. It follows that the Weierstrass function $\wp$ has real invariants and positive discriminant. Consequently, the period lattice of $\wp$ is rectangular; let $2 K$ and $2 \ii K'$ be fundamental periods, with $K > 0$ and $K' > 0$. We may take the period parallelogram to have vertices $0, 2 K, 2 K + 2 \ii K', 2 \ii K'$ (in counter-clockwise order); alternatively, we may take it to have vertices $\pm K \pm \ii K'$ (with all four choices of sign). The values of $\wp$ around the rectangle $0 \to K \to K + \ii K' \to \ii K' \to 0$ strictly decrease from $+ \infty$ to $- \infty$; moreover, the extreme `midpoint values' satisfy $\wp(K) > 0 > \wp(\ii K')$. In particular, $\wp$ is strictly negative along the purely imaginary interval $(0, \ii K')$. 

\medbreak 

As the Weierstrass function $\wp$ is elliptic of order two, with $2 K$ and $2 \ii K'$ as fundamental periods, the same is true of the function 
$$d = 1 - \tfrac{4}{9} k^2 (\wp + \tfrac{1}{3})^{-1}.$$

\medbreak 

One of the most substantial efforts undertaken in [1] is the task of locating the poles of $d$. As a preliminary step, in [1] Lemma 3.1 it is shown (using conformal mapping theory) that $d$ has a pole in the interval $(0, \ii K')$; as $d$ is even, it also has a pole in $(- \ii K', 0)$. The precise location of the poles of $d$ is announced in [1] Lemma 3.2; the proof of this Lemma is prepared in [1] Section 4 and takes up essentially the whole of [1] Section 5. The approach taken in [1] rests heavily on the theory of theta functions and does more than just locate the poles of $d$. Our approach to locating the poles of $d$ will be more direct: we work with $\wp$ alone, without the need for theta functions. The following is our version of [1] Lemma 3.2. 

\medbreak 

\begin{theorem} \label{poles} 
The elliptic function $d$ has a pole at $\frac{2}{3} \ii K'$. 
\end{theorem}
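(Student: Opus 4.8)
\emph{Proof proposal.} The plan is to observe that $d$ acquires a pole at $\tfrac{2}{3}\ii K'$ exactly where its denominator $\wp+\tfrac13$ vanishes, so that the assertion is equivalent to $\wp(\tfrac{2}{3}\ii K') = -\tfrac13$; and to exploit the fact that $\tfrac{2}{3}\ii K'$ is a point of order three relative to the period lattice of $\wp$, since $3\cdot\tfrac{2}{3}\ii K' = 2\ii K'$ is a period whereas $\tfrac{2}{3}\ii K'$ itself is not.

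First I would record the algebraic equation satisfied by $\wp$ at a point $P$ of order three. For such a $P$ we have $2P \equiv -P$ modulo the lattice, hence $\wp(2P) = \wp(-P) = \wp(P)$; and $\wp\,'(P) \ne 0$, since $P$ is not a half-period. Feeding $\wp(2P) = \wp(P)$ into the Weierstrass duplication formula
$$\wp(2P) = -2\wp(P) + \tfrac14 \Big( \frac{\wp\,''(P)}{\wp\,'(P)} \Big)^2$$
yields $\big(\wp\,''(P)\big)^2 = 12\,\wp(P)\,\big(\wp\,'(P)\big)^2$, and substituting $\wp\,'' = 6\wp^2 - \tfrac12 g_2$ and $(\wp\,')^2 = 4\wp^3 - g_2\wp - g_3$ shows that $x = \wp(P)$ is a root of
$$48 x^4 - 24 g_2 x^2 - 48 g_3 x - g_2^2 = 0.$$

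Next I would substitute $x = -\tfrac13$ together with the explicit invariants $g_2 = \tfrac{4}{27}(9-8k^2)$ and $g_3 = \tfrac{8}{27^2}(8k^4 - 36k^2 + 27)$ from Theorem \ref{d}; a short computation shows that the left-hand side vanishes identically in $k$. Thus $-\tfrac13$ is one of the four roots of the quartic, that is, one of the four values taken by $\wp$ on the four $\pm$-pairs of nontrivial order-three points. It remains to check that $-\tfrac13$ is the value at $\tfrac{2}{3}\ii K'$ rather than at one of the other three pairs. Representatives of the four pairs may be taken as $\tfrac{2}{3}K$, $\tfrac{2}{3}\ii K'$, $\tfrac{2}{3}(K+\ii K')$ and $\tfrac{2}{3}(K-\ii K')$. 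At $\tfrac{2}{3}K \in (0,K)$ the value of $\wp$ is real and, $\wp$ decreasing along $0 \to K$, exceeds $\wp(K) > 0$; at $\tfrac{2}{3}\ii K' \in (0,\ii K')$ it is real and strictly negative; and the last two points are complex conjugates of one another, hence (the invariants being real) give complex-conjugate values of $\wp$, which are moreover distinct — the two points being inequivalent modulo the lattice, even up to sign — and therefore not real. So the quartic has exactly one negative real root, namely $\wp(\tfrac{2}{3}\ii K')$; since $-\tfrac13$ is a negative real root, $\wp(\tfrac{2}{3}\ii K') = -\tfrac13$. Consequently $d = 1 - \tfrac49 k^2(\wp+\tfrac13)^{-1}$ has a pole at $\tfrac{2}{3}\ii K'$, indeed a simple one since $\wp\,'$ does not vanish there; and since $\tfrac{2}{3}\ii K'$ lies in $(0,\ii K')$, this recovers, by more elementary means, the qualitative content of [1] Lemma 3.1.

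I expect the decisive step to be the identification of the correct root of the quartic among the four order-three values of $\wp$; this rests on the signs of $\wp$ along the real and imaginary axes (recorded in the discussion above) together with the observation that the remaining two order-three values form a non-real conjugate pair. The verification that $-\tfrac13$ solves the quartic is routine but should be carried out with care.
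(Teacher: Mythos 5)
Your proof is correct, and its skeleton is the same as the paper's: both exploit that $a = \tfrac{2}{3}\ii K'$ satisfies $2a \equiv -a$, feed $\wp(2a)=\wp(a)$ into the duplication formula, and conclude that $\wp(a)$ is a root of the quartic $48x^4 - 24g_2x^2 - 48g_3x - g_2^2$ (the paper's $f$, up to a factor of $4$), finishing with the fact that $\wp<0$ on $(0,\ii K')$. The difference is the endgame. The paper factorizes the rescaled quartic explicitly as $(w+1)$ times a cubic, computes the cubic's discriminant $-\tfrac{4096}{27}k^4(1-k^2)^2<0$, and so sees that $-\tfrac13$ is the unique negative real root, which forces $\wp(a)=-\tfrac13$ with no reference to any other third-period point. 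You instead verify by substitution that $-\tfrac13$ is a root (this does vanish identically in $k$; the quartic is in fact the $3$-division polynomial) and then identify the four roots with the four values of $\wp$ at the $\pm$-pairs of nontrivial third-period points, sorting them by sign and reality: $\wp(\tfrac23 K) > \wp(K) > 0$, $\wp(\tfrac23\ii K') < 0$, and a non-real conjugate pair at $\tfrac23(K\pm\ii K')$. Note that this identification is not decoration but load-bearing: knowing that $-\tfrac13$ is a root proves nothing until every root is known to be such a $\wp$-value, i.e. until the four values are shown pairwise distinct; your observation that the conjugate pair comes from points inequivalent even up to sign (so distinct values, $\wp$ having order two) together with the sign separation does supply this, using also $\wp(\bar z)=\overline{\wp(z)}$ for the real lattice. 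The trade-off is clear: the paper's route is a self-contained algebraic root count for the quartic, while yours avoids the factorization and discriminant computation at the cost of invoking the real structure and the behaviour of $\wp$ along the whole rectangle; as you note, your version also recovers the qualitative content of [1] Lemma 3.1 along the way.
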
 

\begin{proof} 
Theorem \ref{d} makes it plain that $d$ has a pole precisely where $\wp = - 1/3$. For convenience, write $a = \frac{2}{3} \ii K'$; our task is to establish that $\wp(a) = -1/3$. Recall the Weierstrassian duplication formula 
$$\wp(2 a) + 2 \, \wp(a) = \frac{1}{4} \Big\{ \frac{\wp''(a)}{\wp'(a)}\Big\}^2$$
where 
$$\wp''(a) = 6 \, \wp(a)^2 - \frac{1}{2} \, g_2$$ 
and 
$$\wp'(a)^2 = 4 \, \wp(a)^2 - g_2 \, \wp(a) - g_3.$$ 
Here, 
$$2 a = \frac{4}{3} \ii K' \equiv - \frac{2}{3} \ii K' = - a$$
where the middle congruence is modulo the period $2 \ii K'$ of $\wp$; consequently, 
$$\wp(2 a) = \wp (- a) = \wp (a)$$ 
because $\wp$ is even. The left side of the duplication formula thus reduces to $3 \, \wp(a)$ and we deduce that $b = \wp(a)$ satisfies 
$$3 b = \frac{1}{4} \frac{(6 b^2 - \frac{1}{2} g_2)^2}{4 b^3 - g_2 b - g_3}.$$
Otherwise said, $\wp(a)$ is a zero of the quartic $f$ defined by 
$$f(z) = 12 z (4 z^3 - g_2 z - g_3) - (6 z^2 - \tfrac{1}{2} g_2)^2.$$
For convenience we work with   
$$\frac{27}{4} f(\frac{w}{3}) = w^4 - \frac{2}{3} (9 - 8 k^2) w^2 - \frac{8}{27} (8 k^4 - 36 k^2 + 27) w - \frac{1}{27} (9 - 8 k^2)^2$$
which factorizes as 
$$\frac{27}{4} f(\frac{w}{3}) = (w + 1) \Big(w^3 - w^2 + \frac{1}{3} (16 k^2 - 15) w - \frac{1}{27} (9 - 8 k^2)^2\Big).$$ 
Here, the cubic factor has discriminant 
$$- \frac{4096}{27} k^4 (1 - k^2)^2 < 0$$
and so has just one real zero, which is clearly positive. Thus, $f$ has four zeros: a conjugate pair of non-real zeros, a positive zero and $-1/3$. 
As the values of $\wp$ along $(0, \ii K')$ are strictly negative, it follows that $\wp(a) = - 1/3$ as claimed. 

\end{proof} 

\medbreak 

As an even function, $d$ also has a pole at $- \frac{2}{3} \ii K'$. Both of the poles $\pm \frac{2}{3} \ii K'$ lie in the period parallelogram with vertices $\pm K \pm \ii K'$ and $d$ has order two, so each pole is simple and the accounting of poles (modulo periods) is complete; of course, this may be verified otherwise. 

\medbreak 

We close our commentary with a couple of remarks. 

\medbreak 

In [1] it is mentioned that the squares $s^2$ and $c^2$ are elliptic: as $d$ is elliptic, these facts follow in turn from the $(s, d)$ relation $4 k^2 s^2 = (1 - d) (2 + d)^2$ and the $(s, c)$ relation $c^2 = 1 - s^2$. As $d$ has simple poles, it follows that the poles of $s^2$ and $c^2$ are triple; thus, $s$ and $c$ themselves are not elliptic, as is also mentioned in [1]. Beyond this, the product $s \, c$ is elliptic with triple poles: indeed, 
$$s\, c = - \frac{3}{8 k^2} (2 + d) d\,' = - \frac{3}{16 k^2} \{ (d + 2)^2\}'.$$

\medbreak 

\section*{}

\bigbreak

\begin{center} 
{\small R}{\footnotesize EFERENCES}
\end{center} 
\medbreak 

[1] Li-Chien Shen, {\it On the theory of elliptic functions based on $_2F_1(\frac{1}{3}, \frac{2}{3} ; \frac{1}{2} ; z)$}, Transactions of the American Mathematical Society {\bf 357}  (2004) 2043-2058. 

\medbreak

\end{document}